\documentclass[11pt]
{amsart}
\usepackage{hyperref}
\usepackage{pdfpages}
\hypersetup{nesting=true,debug=true,naturalnames=true}
\usepackage{graphicx,amssymb,upref}

     % Please give any \input statements here:
     
\newtheorem{theorem}{Theorem}

\newtheorem{definition}{Definition}
\title{Asymmetric five person Hat Game}
%please make separate author, address, email blocks for each author
\author{Theo van Uem}  
\address{Amsterdam University of Applied Sciences, Amsterdam, The Netherlands.} 
\email{tjvanuem@gmail.com}  
%\thanks{} % \thanks entries are to acknowledge grants. 

%\keywords{coding, hat problem, Hamming codes, binary covering, computational complexity}

%\subjclass[2020]{91A12, 68Q11, 00A08}
%\href{mailto:tjvanuem@gmail.com}{tjvanuem@gmail.com} 

\begin{document}
\hbadness=99999

\begin{abstract}
 The Hat Game (Ebert’s Hat Problem) got much attention in the beginning of this century; not in the last place by its connections to coding theory and computer science. All players guess simultaneously the color of their own hat observing only the hat colors of the other players. It is also allowed for each player to pass: no color is guessed. The team wins if at least one player guesses his or her hat color correct and none of the players has an incorrect guess. This paper studies Ebert’s hat problem with 5 players and two colors where the probabilities of the colors may be different (asymmetric case). Our goal is to maximize the probability of winning the game and to describe winning strategies. In this paper we use the notion of an adequate set. The construction of adequate sets is independent of underlying probabilities and we can use this fact in the analysis of the asymmetric case. Another point of interest is the fact that computational complexity using adequate sets is much less than using standard methods.
\end{abstract}
\maketitle
\section{Introduction}
Hat puzzles were formulated at least since Martin Gardner’s 1961 article~\cite{MG}. They have got an impulse by Todd Ebert in his Ph.D. thesis in 1998~\cite{TE}. Buhler~\cite{JB} stated: “It is remarkable that a purely recreational problem comes so close to the research frontier”. Also articles in The New York Times~\cite{SR}, Die Zeit~\cite{WB} and abcNews~\cite{JP} about this subject got broad attention. This paper studies  generalized Ebert’s hat problem (symmetric and asymmetric): five distinguishable players are randomly fitted with a colored hat (two colors available), where the probabilities of getting a specific color  may be different, but known to all the players. All players guess simultaneously the color of their own hat observing only the hat colors of the other four players. It is also allowed for each player to pass: no color is guessed. The team wins if at least one player guesses his or her hat color correctly and none of the players has an incorrect guess. Our goal is to maximize the probability of winning the game and to describe winning strategies.
The symmetric two color hat problem (equal probabilities for each color) with $N=2^k-1$ players is solved in~\cite{EMV}, using Hamming codes, and with $N=2^k$ players in~\cite{GC} using extended Hamming codes. 
Burke et al.~\cite{EB} try to solve the symmetric hat problem with $N=3,4,5,7$ players using genetic programming. Their conclusion: The $N$-prisoners puzzle (alternative names: Hat Problem, Hat Game) gives evolutionary computation and genetic programming a new challenge to overcome. 
Lenstra and Seroussi~\cite{HL} show that in the symmetric case of two hat colors, and for any value of $N$, playing strategies are equivalent to binary covering codes of radius one.
Combining the result of Lenstra and Seroussi with Tables for Bounds on Covering Codes~\cite{GK}, we get:\par
\begin{center}
\begin{tabular}{|c|c|c|c|c|c|c|c|c|}
\hline
$N$&2&3&4&5&6&7&8&9\\
\hline
$K(N,1)$&2&2&4&7&12&16&32&62\\
\hline
\end{tabular}
\par
\end{center}

$K(N,1)$ is smallest size of a binary covering code of radius 1. Maximum probability for Ebert's symmetric Hat Game is $1-\frac{K(N,1)}{2^N}$. Lower bound on $K(9,1)$ was found in 2001 by Östergård-Blass, the upper bound in 2005 by Östergård.
Krzywkowski~\cite{MK} describes applications of the hat problem and its variations, and their connections to different areas of science.  
Johnson~\cite{BJ}  ends his presentation with an open problem:
If the hat colors are not equally likely, how will the optimal strategy be affected?
We will answer this question for five persons and two colors and our method gives also interesting results in the symmetric case.
In section 2 we define an adequate set. 
In section 3 we obtain results for the asymmetric five player, two color Hat Game.
In section 4 we do the same  for  the symmetric Hat Game. 
In all situations all players know the underlying probabilities of each player.

\section{Adequate sets}
In this section we have $N$ players and $q$ colors.
The $N$ persons in our game are distinguishable, so we can label them from 1 to
$N$. We label the $q$ colors $0,1,..,q-1.$ The probabilities of the colors are
fixed and known to all players. The probability that color $i$ will be on a hat is
$p_i$ $(i\in\{0,1,..,q-1\},\ \ \sum_{i=0}^{q-1}p_i=1).$\\
Each possible configuration of the hats can be represented by an element of
$B=\{b_1b_2\dots b_N\vert b_i\in\left\{0,1,\dots,q-1\right\},\ i=1,2..,N\}$.
The S-code represents what the $N$ different players sees. Player $i$ sees q-ary code $b_1..b_{i-1}b_{i+1}..b_N$ with decimal value
$s_i=\sum_{k=1}^{i-1}b_k.q^{N-k-1}+\sum_{k=i+1}^Nb_k.q^{N-k}\ $, a value between 0 and $q^{N-1}-1.$ \\
Let S be the set of all S-codes:
$S=\{s_1s_2\dots s_N\vert{}s_i=\sum_{k=1}^{i-1}b_k.q^{N-k-1}+\sum_{k=i+1}^Nb_k.q^{N-k},b_i\in{}\{0,1,\dots,q-1\},\
i=1,2,\dots,N\ \}$.\\
Each player has to make a choice out of $q+1$ possibilities: 0='guess color 0', 
1='guess color 1', \ldots{}.,$\ q-1$ ='guess color $q-1$', $q$='pass'.
\\
We define a decision matrix $D=\left(a_{i,j}\right) \ $ where
$i\in{}\{1,2,..,N\}$(players); $j\in{}\{0,1,..,q^{N-1}-1\}$(S-code of a player);
$a_{i,j}\in{}\left\{0,1,..,q\right\}.$ \\
The meaning of $a_{i,j}$ is: player $i$ sees S-code $j$ and takes decision $a_{i,j}$ (guess a color or pass).
We observe the total probability (sum) of our guesses. 
For each $b_1b_2\dots b_N$ in B  with $n_i$ times color $i$   $(i=0,1,\dots,q-1,\
\sum_{i=0}^{q-1}n_i=N\ $) we have:\newline

CASE  $b_1b_2\dots b_N$  

 (S-code player $i$:
$s_i=\sum_{k=1}^{i-1}b_k.q^{N-k-1}+\sum_{k=i+1}^Nb_k.q^{N-k}$)\\
IF $a_{1{,s}_1}\in{}\{q,b_1\}$  AND $a_{2,s_2}\in{}\{q,b_2\}$  AND ... AND 
$a_{N,s_N}\in{}\{q,b_N\}$  AND \\ NOT
($a_{1,s_1}=a_{2{,s}_2}=\dots=a_{N,s_N}=q)$  THEN sum=sum+$p_0^{n_0}.p_1^{n_1}\dots\
p_{q-1}^{n_{q-1}}$.
\newline

Any choice of the $a_{i,j}$ in the decision matrix determines which CASES $b_1b_2\dots b_N$ have a
positive contribution to sum (we call it a GOOD CASE) and which CASES don't
contribute positive to sum (we call it a BAD CASE).
\begin{definition}
 Let $A \subset B$. $A$ is adequate to $B-A$ if for each q-ary element $x$ in $B-A$ there are $q-1$ elements in A which are equal to $x$ up to one fixed q-ary position.
 \end{definition}
 
\begin{theorem}
 BAD CASES are adequate to GOOD CASES.
\end{theorem}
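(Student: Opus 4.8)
The plan is to exploit the one structural fact that makes the hat game work at all: a player's S-code does not involve that player's own hat color, so changing only that coordinate leaves that player's decision unchanged.

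First I would record the trivial but essential observation that, by the way GOOD and BAD cases were defined, every element of $B$ is exactly one of the two, so the set of BAD CASES is literally the complement $B-G$, where $G$ denotes the set of GOOD CASES. Thus, matching the definition of adequacy with $A=\,$BAD CASES and $B-A=G$, what has to be shown is: for every $x=b_1b_2\dots b_N\in G$ there is a position $k$ such that the $q-1$ configurations obtained from $x$ by changing coordinate $k$ to each of the other $q-1$ colors are all BAD.

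Next, fix $x=b_1b_2\dots b_N\in G$. Since $x$ is GOOD we have $a_{i,s_i}\in\{q,b_i\}$ for every $i$ and not all of the $a_{i,s_i}$ equal $q$; hence some player $k$ satisfies $a_{k,s_k}=b_k\neq q$, i.e.\ player $k$ actually guesses, and guesses correctly. Take this $k$ as the fixed position. For each color $c\in\{0,1,\dots,q-1\}\setminus\{b_k\}$ let $x^{(c)}$ be the configuration $x$ with coordinate $k$ replaced by $c$. From the formula $s_k=\sum_{j=1}^{k-1}b_j q^{N-j-1}+\sum_{j=k+1}^{N}b_j q^{N-j}$ the quantity $s_k$ does not depend on $b_k$, so in $x^{(c)}$ player $k$ still sees S-code $s_k$ and still takes decision $a_{k,s_k}=b_k$; but in $x^{(c)}$ player $k$'s hat color is $c\neq b_k$, so player $k$ guesses incorrectly and $x^{(c)}$ is a BAD CASE no matter what the other players do. The $q-1$ configurations $x^{(c)}$ are pairwise distinct, lie in the set of BAD CASES, and agree with $x$ in every coordinate except $k$, which is exactly the requirement in the definition of adequacy. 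Since $x\in G$ was arbitrary, BAD CASES are adequate to GOOD CASES.

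I do not expect a serious obstacle: the whole content is the selection of a correctly-guessing player (which the ``not all pass'' clause guarantees) together with the coordinate-independence of $s_k$, both of which are immediate from the definitions. The only points to watch in the write-up are keeping straight which class plays the role of $A$ in the definition and checking that the $q-1$ produced configurations are genuinely distinct, which holds since they differ pairwise at coordinate $k$.
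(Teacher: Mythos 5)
Your proposal is correct and follows essentially the same argument as the paper: select a player who actually guesses (hence guesses correctly, by the GOOD-case conditions), and note that altering only that player's hat color leaves their S-code and decision unchanged, producing $q-1$ BAD cases differing from the GOOD case in that single position. Your write-up simply makes explicit the two points the paper leaves implicit (the coordinate-independence of $s_k$ and why the altered configurations are BAD).
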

\begin{proof}
  Any  GOOD CASE  has at least one $a_{i,j}$ not equal to $q$. Let this
specific $a_{i,j}$ have value $b_{i_0}.$ Then our GOOD CASE generates $q-1\ $BAD CASES
by only changing  the value $b_{i_0}$ in any value of $0,1,..,q-1\ $
except $b_{i_0}$.  
\end{proof}
The definition of adequate set is the same idea as the concept of 
strong covering, introduced by Lenstra and Seroussi~\cite{HL}. The number of elements in an adequate set will be written as \textit{das} (dimension of adequate set).
Adequate sets are generated by an adequate set generator (ASG) We have implemented an ASG in VBA/Excel (see Appendix \ref{appendix:one}). \\
Given an adequate set, we obtain a decision matrix $ D=\left(a_{i,j}\right)$ by the following procedure.\\ 
Procedure DMG (Decision Matrix Generator):\\
Begin Procedure\\
For each element in the adequate set:
\begin{itemize}
\item Determine the q-ary representation $b_1b_2\dots b_N$
 \item Calculate S-codes $s_i=\sum_{k=1}^{i-1}b_k.q^{N-k-1}+\sum_{k=i+1}^Nb_k.q^{N-k}$($i$=1..N)
\item For each player $i$: fill decision matrix with  $a_{{i,s}_i}=b_i$  ($i$=1,..,N), where each cell may contain several values.
\end{itemize}
Matrix $D$ is filled with BAD COLORS. We can extract the GOOD COLORS by considering all $a_{i,j}$ with $q-1$ BAD COLORS and then choose the only missing color. In all situations with less than $q-1$ BAD COLORS we pass.
When there is an $a_{i,j}$ with $q$ BAD COLORS all colors are bad, so the first
option is to pass. But when we  choose any color, we get a situation with
$q-1$ BAD COLORS.  So in case of $q$ BAD COLORS we are free to choose any color or
pass.
The code for pass is $q$, but in our decision matrices we prefer a blank, which
supports readability.
The code for `any color or pass will do' is defined  $q+1$, but in our decision
matrices we prefer a $\star$. \\
End Procedure.\\
Decision matrices are generated by an Decision Matrix Generator (DMG).We have implemented a DMG in VBA/Excel (see Appendix \ref{appendix:two}). 
\section{Asymmetric five person Hat Game}\label{threethree}
\begin{theorem}
In asymmetric five person (two color) hat game we have maximal probability $\Psi{}\left(5,p\right)$ of winning the game:
\begin{equation} \label{100}
\Psi{}\left(5,p\right)=\left\{\begin{array}{l}1-p+2p^2-2p^3+p^5\ \ \ \ \ \ \ \ \
\ \ \ \ \ \ \ \ \  (0\leq{}p\leq{}\sqrt{2}-1) \\
5p-10p^2+6p^3+p^4-p^5\ \ \ \ \ \ \ \ \ \ \ \ \ \ (\sqrt{2}-1\leq{}p\leq{}0.5) \\
1-2p+4p^2-4p^4+p^5\ \ \ \ \ \ \ \ \ \ \ \ \ \ \ \ \ (0.5\leq{}p\leq{}2-\sqrt{2})
\\
1-2p+6p^2-8p^3+5p^4-p^5\ \ \ \ \ \ \ \ \ \
(2-\sqrt{2}\leq{}p\leq{}1)\end{array}\right.
\end{equation}
with  optimal decision matrices:\\
CASE $0.5<p \leq 2-\sqrt {2}$:\\
\includegraphics[scale=0.7]{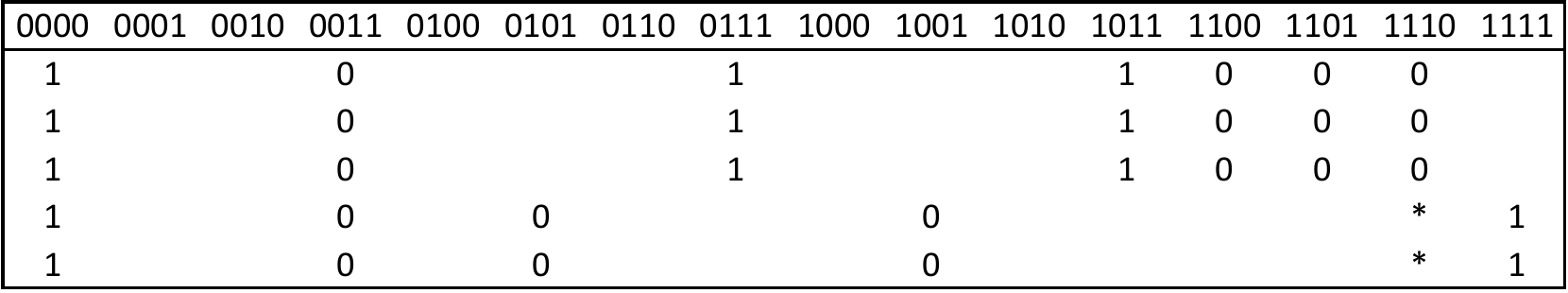} \label{101}\\
CASE $2-\sqrt{2} \leq p<1$:\\
\includegraphics[scale=0.7]{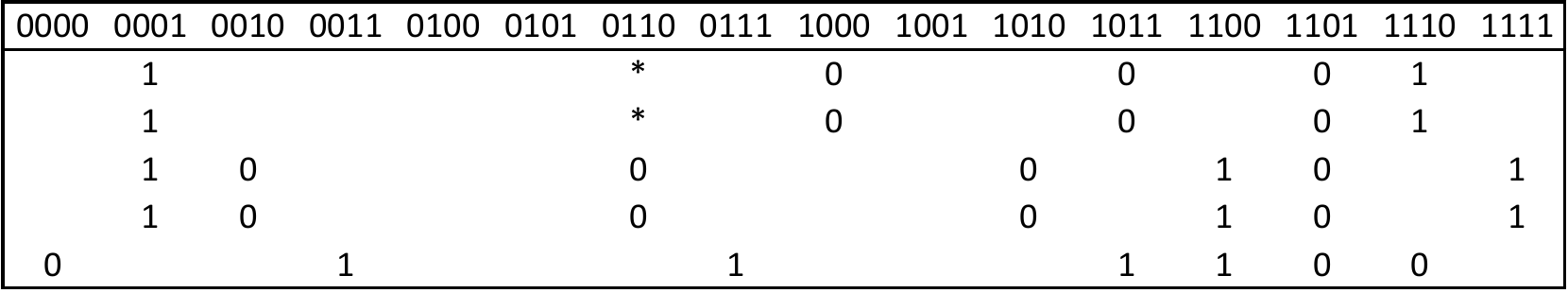} \label{102}\\
{\raggedright
($\star$ means: any color or pass; stars are independent)
}
\end{theorem}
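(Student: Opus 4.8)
The plan is to reduce the optimization over all decision matrices to a finite combinatorial search over adequate sets, exploiting Theorem 2 (BAD CASES are adequate to GOOD CASES) so that the construction is independent of $p$. With $N=5$ and $q=2$ there are $2^5=32$ configurations in $B$, and each S-code ranges over $0,\dots,15$. First I would enumerate, using the ASG, all minimal adequate sets $A\subset B$ up to the symmetries of the problem (permuting players, and — crucially in the asymmetric case — \emph{not} swapping the two colors, since $p$ and $1-p$ play distinct roles). For each candidate adequate set $A$ of size $das$, running the DMG procedure yields a decision matrix, and the winning probability is computed as $1-\sum_{b\in A}p^{n_0(b)}(1-p)^{n_1(b)}$ where $n_0(b),n_1(b)$ count the zeros and ones in $b$; this is a polynomial in $p$ of degree $5$. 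Thus each adequate set contributes one explicit polynomial, and $\Psi(5,p)$ is the upper envelope $\max_A\bigl(1-\sum_{b\in A}p^{n_0(b)}(1-p)^{n_1(b)}\bigr)$ over the finite list.

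The key steps, in order, are: (i) argue that an optimal strategy corresponds to a \emph{minimal} adequate set — any strategy's BAD CASES form an adequate set by Theorem 2, and enlarging the BAD set only decreases the probability, so we may restrict to minimal ones; (ii) bound $das$ from below (the covering-code bound gives $das\geq 7$ in the symmetric case, and more refined counting by color-composition is needed here) and enumerate all minimal adequate sets with the ASG; (iii) for each, read off the polynomial $1-\sum_{b\in A}p^{n_0(b)}(1-p)^{n_1(b)}$, grouping the $32$ configurations by their number of $0$'s ($\binom{5}{k}$ configurations with $k$ zeros for $k=0,\dots,5$) so the polynomial depends only on how many elements of $A$ have each composition; (iv) compare the resulting finite family of degree-$5$ polynomials on $[0,1]$, find the breakpoints where the maximizing polynomial changes, and verify these are exactly $\sqrt2-1$, $0.5$, and $2-\sqrt2$ by checking equality of the relevant polynomial pairs at those points; (v) exhibit the adequate sets achieving the max on $(0.5,2-\sqrt2)$ and on $(2-\sqrt2,1)$, apply DMG to get the two displayed decision matrices, and note that the cases $p\leq 0.5$ follow by the substitution $p\mapsto 1-p$ which swaps colors and reflects the interval.

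The main obstacle I expect is step (ii): controlling the enumeration of adequate sets so that the search is provably exhaustive rather than merely computational. One must show that every minimal adequate set is, up to player-permutation, one of finitely many explicitly listed sets — and that no adequate set of size smaller than those used is possible in the relevant composition classes. This requires a careful lower-bound argument on $das$ refined by the number of elements of each Hamming weight (not just total size), because in the asymmetric regime a set that is slightly larger overall but concentrated on low-weight or high-weight configurations can beat the symmetric-optimal covering code for $p$ near $0$ or $1$. A secondary, more routine obstacle is the piecewise comparison in step (iv): one must confirm that only four polynomials appear in the envelope and that they cross pairwise exactly at $\sqrt2-1$, $0.5$, $2-\sqrt2$, with no spurious crossings in between; factoring the differences (e.g.\ showing $(5p-10p^2+6p^3+p^4-p^5)-(1-p+2p^2-2p^3+p^5)$ vanishes at $p=\sqrt2-1$) makes this tractable. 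Finally, one should double-check that the matrices produced by DMG indeed realize the claimed polynomials, by directly tallying GOOD CASES, which serves as an independent verification of the whole computation.
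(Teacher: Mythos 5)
Your proposal follows essentially the same route as the paper: reduce the optimization to adequate sets via Theorem 2, enumerate them with the ASG, compare the resulting degree-5 polynomials $1-\sum_{b\in A}p^{n_0(b)}q^{n_1(b)}$ to locate the breakpoints $\sqrt{2}-1$, $\tfrac12$, $2-\sqrt{2}$, obtain the decision matrices by DMG, and recover the $p\leq\tfrac12$ half by swapping the roles of $p$ and $q$. The only divergence is at your step (ii): where you anticipate a weight-refined theoretical lower bound on $\textit{das}$, the paper instead settles exhaustiveness computationally, enumerating the 320 adequate sets with $\textit{das}=7$, organizing them into 12 probability classes whose dominance order reduces everything to the pair $(024001)$ versus $(022210)$, and running the dominance-pruned ASG for $\textit{das}=8,\dots,32$ to confirm no better sets exist.
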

\begin{proof}
We run the application ASG with parameters $N$=5, $p$=0.55 and \textit{das}=7.  This yields 320 adequate sets. When we sort these sets by probability, we get 12 different classes (see Appendix \ref{appendix:four} where we show the first element of each class).
The adequate set  and the number of zero's in each element in an adequate set
are independent of $p$. We note the following structure in Appendix \ref{appendix:four}:
\begin{center}
    \begin{tabular}{|l|l|}
    \hline
    012345  & {probability} \\
    \hline
    024001 &$2pq^4+4p^2q^3+p^5$  \\
      022210 & $2pq^4+2p^2q^3+2p^3q^2+p^4q$ \\
   {111310} &$q^5+pq^4+p^2q^3+3p^3q^2+p^4q$  \\
    013210 & $pq^4+3p^2q^3+2p^3q^2+p^4q$ \\
   {102310} &$q^5+2p^2q^3+3p^3q^2+p^4q$  \\
    012310 &$pq^4+2p^2q^3+3p^3q^2+p^4q$  \\
       {120130} & $q^5+2pq^4+p^3q^2+3p^4q$ \\
       013201 &$pq^4+3p^2q^3+2p^3q^2+p^5$  \\
    {012220} & $pq^4+2p^2q^3+2p^3q^2+2p^4q$ \\
     031021 & $3pq^4+p^2q^3+2p^4q+p^5$ \\
        013111 & $pq^4+3p^2q^3+p^3q^2+p^4q+p^5$ \\
       {100420} &$q^5+4p^3q^2+2p^4q$  \\
       \hline
    \end{tabular}%
\end{center}
\begin{figure}
\includegraphics[width=\textwidth]{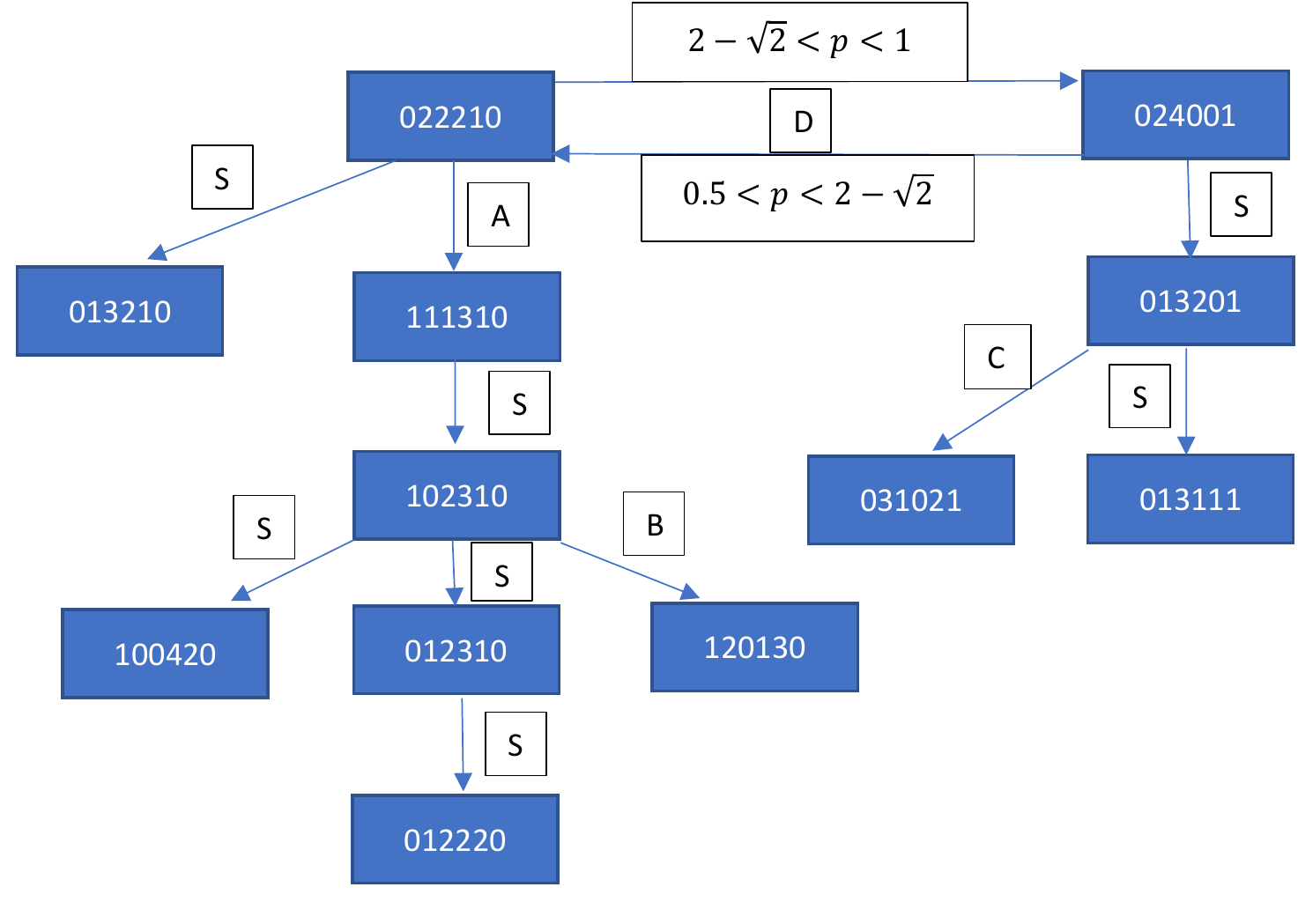}
\caption{Dominance relations}
\label{SSS}
\end{figure}
{\raggedright
Figure \ref{SSS} shows the dominance relations between this 12 different
classes.\\
}
{\raggedright
A class is dominant over another class if the probability of that class is less
than the probability of the other class.
} \\
%\graphicspath{file:///C:/%Users/Theo/Desktop/}
{\raggedright
An arrow with a S means that one or more shifts of bits to the left (of the
dominated one) results to the pattern of  the dominant one (shifts to the left
are 'cheaper', because of $p$$>$$q$).
}
More formally, for fixed $das$ we have: the pattern b(0) b(1) b(2) b(3) b(4) b(5) is dominant over pattern a(0) a(1) a(2) a(3) a(4) a(5) when in each position there is enough compensation:\\ \\
b(0)$\geq$ a(0)
\newline
b(0)+b(1) $\geq$ a(0)+a(1) 
\newline
...
\newline
...
\newline
b(0)+b(1)+b(2)+b(3)+b(4)+b(5) $\geq$ a(0)+a(1)+a(2)+a(3)+a(4)+a(5)
\newline

The last inequality may be omitted (we have $\textit{das} \geq \textit{das}$). \\
The dominance relation is transitive.

{\raggedright
The arrows A,B,C and D needs some attention.\\
 We use  $\succ{}$  as symbol for
dominance.\\
}
{\raggedright
A:   022210$\ \succ{}$ 111310
}\\
If $p>q>0$ then $-q^5+pq^4+ p^2q^3-p^3q^2=
  -q^5+pq^4+{\ p}^2q^3-p^3q^2=-q^2{(p-q)}^2<0$
\\
{\raggedright
B and C:   102310 $\succ{}$ 120310    and    013201 $\succ{}$ 031021 \\
}
{\raggedright
if $p$$>$$q$ $>$0 then 
  $-2pq^4+{2p}^2q^3+{\ 2p}^3q^2-{\ 2\ p}^4q=-2pq{\left(p-q\right)}^3<0\ 
$\\
}
{\raggedright
D:  If \ $2-\sqrt{2}<p<1\ $ then 022210 $\succ{}$ 024001 and
if  \ $0.5<p<2-\sqrt{2}$ then   024001 $\succ{}$ 022210 \\
}
{\raggedright
  ${-2p}^2q^3+{ 2p}^3q^2+{ p}^4q-{ p}^5=p^2\left({
2p}^3-9p^2+8p-2\right)=p^2\left(2p-1\right)\left(p^2-4p+2\right)=p^2\left(2p-1\right)(p-2-\sqrt{2})(p-2+\sqrt{2})$
}\\
\newline
{\raggedright
Let $\Psi{}\left(N,p\right)$ be the maximum probability of correct guessing.\\
}
{\raggedright
If $2-\sqrt{2}<p<1\ $ then 022210 $\succ{}$ 024001 and
}
\[\Psi{}\left(5,p\right)=
1-\left(2pq^4+{2p}^2q^3+{2p}^3q^2+{p}^4q\
\right)=1-2p+{6p}^2-8p^3+{5p}^4-{p}^5
\]
{\raggedright
If $0.5<p<2-\sqrt{2}$ then   024001 $\succ{}$ 022210 and
}
\[
\Psi{}\left(5,p\right)=1-\left(2pq^4+{\ 4p}^2q^3+\
p^5\right)=1-2p+{4p}^2-{4p}^4+p^5
\]
{\raggedright
By changing the roles of $p$ and $q$ we get: \eqref{100}
} \\
{\raggedright
Graph of $\Psi{}\left(5,p\right)$:
}
\begin{center}
 \includegraphics[scale=0.5]{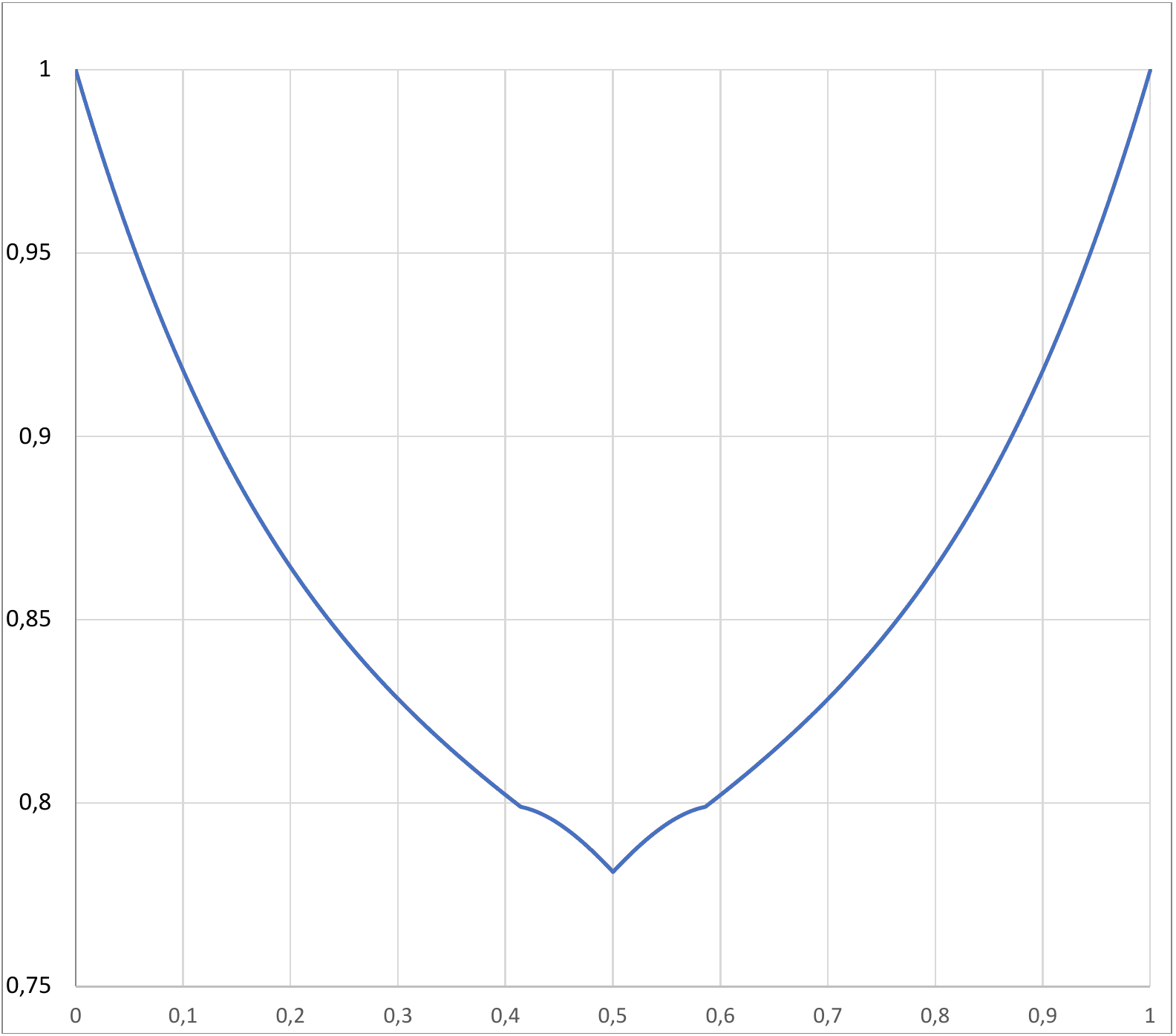}
  \end{center}
 {\raggedright
We remark: minimum is at $(\frac{1}{2},\frac{25}{32})$ and $\Psi{}\left(5,p\right)$ is not
differentiable at $\frac{1}{2}$, $\sqrt{2}-1\ $and $2-\sqrt{2}$.
}
{\raggedright
When $N$=5 we have 320 adequate sets.
}
{\raggedright
Using the program ASG \textit{p}=0.55 \textit{das}=7   we get after sorting on sum:
}
{\raggedright
when $0.5<p<2-\sqrt{2}$ we have 10 optimal adequate sets.
}
{\raggedright
Using the program ASG  \textit{p}=0.9 \textit{das}=7   we get after sorting on sum:
when  $2-\sqrt{2}<p<1\ $ we have 30 optimal adequate sets. \\
}
{\raggedright
When  $\textit{p}=2-\sqrt{2}$  : 40 optimal adequate sets (the union of the two foregoing
adequate sets).
}\\
{\raggedright
Using procedure DMG  , we give the first element in each case: \\
}
{\raggedright
CASE $0.5<p<2-\sqrt{2}$ : adequate set \{0,7,11,19,28,29,30\};\\
Use: DMG  \textit{p}=0.55 to obtain the decision matrices.
}
{\raggedright
Note: players 1,2,3 have same strategy; players 4 and 5 also.
}
\newline
{\raggedright
CASE  $2-\sqrt{2}<p<1\ $ : adequate set \{1,6,14,22,24,27,29\};\\
Use DMG   to obtain the decision matrices.
}
\\
{\raggedright
Note: players 1,2  have same strategy, players 3,4 also; player 5 has her own
strategy.
}
\newline
{\raggedright
CASE  $\textit{p}=2-\sqrt{2}$: we get the union of 10 optimal sets in case of
$0.5<p<2-\sqrt{2}$  and 30 optimal sets in case of $2-\sqrt{2}<p<1\ $.
}\\ \\
We call two solutions isomorphic when we can transform one solution to the other by renumbering the players. Observing all 40 decision matrices and making use of the positions (players) of the stars in these matrices, we get the result in Appendix \ref{appendix:four2}, where the column STARS gives the positions of the  two stars and the column CYCLES gives the renumbering to obtain the same solution. This can be verified by writing each adequate set in the binary representation and then applying the cycles.
Appendix \ref{appendix:four2} shows that there are only two non-isomorphic solutions (these two solutions can't be isomorphic for isomorphic sets have always the same probability).
\newline \\
{\raggedright
The last point here is to convince ourselves  that any adequate set with
\textit{das}$>$7 doesn't yield better solutions.
\\ 
Let $b=das(b())$, $a=das(a())$.
When $e=a-b \geq 0$ then we delete the 'cheapest' $a-b$ elements from $a()$, which gives a 'cheaper' situation. We get: 
\newline
\newline
b(0)$\geq$ a(0)-e
\newline
b(0)+b(1) $\geq$ a(0)+a(1)-e 
\newline
...
\newline
...
\newline
b(0)+b(1)+b(2)+b(3)+b(4)+b(5) $\geq$ a(0)+a(1)+a(2)+a(3)+a(4)+a(5)-e
\newline \\
These dominance relations are implemented as procedure $dom()$ in ASG. \\
Run the program ASG   with \textit{das}=8,9,\ldots{}32.
} where we have 5 calls to  $dom()$ (see figure \ref{SSS}; we omitted the S patterns, for they are detected by the procedure). No (dominant) adequate sets are found. \\
Conclusion: $\textit{das}=7$ is optimal and $\{(022210),(024001)\}$ dominates all adequate sets.
\end{proof}

\subsection{Computational complexity}

{\raggedright
We consider the number of strategies to be examined to solve the hat problem
with $N$ players and two colors. Each of the $N$ players has $2^{N-1}$ possible
situations to observe and in each situation there are three possible guesses:
white, black or pass. So we have ${(3^{2^{N-1}})}^N$ possible strategies.
Krzywkowski [14] shows that is suffices to examine ${(3^{2^{N-1}-2})}^N$
strategies.
}

{\raggedright
The adequate set method has to deal where \{$i_1,i_2,..,i_{\textit{das}}\}$ with
$0\leq{}i_1<i_2<..<i_{\textit{das}}\leq{}2^N-1$.
}

{\raggedright
The number of strategies for fixed \textit{das} is the number of subsets of dimension \textit{das} of
\{0,1,\ldots{},$\ 2^N$ -1\}: $\left(\begin{array}{l}2^N \\
\textit{das}\end{array}\right)$.
}
But we have to test all possible values of \emph{\textit{das}}. So the correct expression is: $\sum_{\emph{\textit{das}}}
\left(\begin{array}{l}2^N \\
\textit{das}\end{array}\right)=2^{(2^N)}$.
{\raggedright
To get an idea of the power of the adequate set method, we compare the number of
strategies (brute force, Krzywkowski and adequate set method):
}
\begin{center}
\renewcommand{\arraystretch}{1.5}
    \begin{tabular}{|c|c|c|c|}
    \hline
    
   {N}  &   ${(3^{2^{N-1}})}^N$    &   ${(3^{2^{N-1}-2})}^N$   &$2^{(2^N)}$  \\
     
   \hline
       5         & 1.50E+38 & 2.50E+33 &4.29E+09 \\
    
    \hline
    \end{tabular}%
 \end{center}

\section{Symmetric two color five person Hat Game}
In this section with five persons the two colors have the same probability: $p=q=\frac{1}{2}$.
\begin{theorem}
The maximal probability for symmetric five person two color Hat Game is $\frac{25}{32}$. All 12 non-isomorphic optimal decision matrices are given in Appendices \ref{appendix:four3} and \ref{appendix:four4}.
\end{theorem}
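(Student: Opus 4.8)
The plan is to derive the statement as the $p=q=\frac12$ specialization of the asymmetric theorem of Section~\ref{threethree}. First I would evaluate $\Psi(5,p)$ at $p=\frac12$: the number $\frac12$ is the common endpoint of the second and third branches of \eqref{100}, and substituting it into either branch gives the same value, e.g.\ from the second branch $5\cdot\frac12-10\cdot\frac14+6\cdot\frac18+\frac1{16}-\frac1{32}=\frac{25}{32}$. Since that theorem already shows that $das=7$ is optimal for \emph{every} $p\in(0,1)$ and that $\{(022210),(024001)\}$ dominates all adequate sets, nothing more is needed to conclude that $\frac{25}{32}$ is the maximal winning probability and that it is attained.

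Next I would identify \emph{all} optimal strategies, which is where the symmetric case becomes special. When $p=q=\frac12$ every configuration $b_1\dots b_5\in B$ has probability $2^{-5}=\frac1{32}$, regardless of how many $0$'s and $1$'s it contains; hence an adequate set of size $das$ contributes total bad probability $das/32$, so the corresponding strategy wins with probability exactly $1-das/32$. A strategy is therefore optimal precisely when its adequate set has size $7$ (size $\le 6$ is excluded, as it would give winning probability $\ge\frac{26}{32}>\Psi(5,\frac12)$). Consequently \emph{all} $320$ adequate sets produced by ASG with $N=5$ and $das=7$ are optimal, and procedure DMG turns each of them into an optimal decision matrix.

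Finally I would reduce this list modulo isomorphism — two decision matrices being isomorphic, exactly as in Section~\ref{threethree}, when one is obtained from the other by renumbering the five players. I would let $S_5$ act on the $320$ optimal adequate sets written in binary, split them into orbits, and use cheaply computed invariants — the number of passes, the multiset of rows of the matrix (the individual players' strategies), and above all the positions of the $\star$ entries — both to locate orbits and to certify that representatives of different orbits are genuinely non-isomorphic (isomorphic matrices must agree on all of these, and isomorphic sets additionally share their probability polynomial, which matches each orbit to one of the $12$ probability classes of Section~\ref{threethree}). The outcome is exactly $12$ orbits; a representative decision matrix for each, together with the renumbering cycles that verify the identifications as in Appendix~\ref{appendix:four2}, is displayed in Appendices~\ref{appendix:four3} and~\ref{appendix:four4}. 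The main obstacle I anticipate is precisely this last step: confirming that the $320$ optimal matrices collapse to \emph{precisely} $12$ classes — no more and no fewer — which is a finite check with the VBA/Excel tooling, but one whose completeness must be argued rather than merely trusted.
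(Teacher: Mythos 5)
Your proposal is correct and follows essentially the same route as the paper: specialize the Section~\ref{threethree} result to $p=q=\frac12$ (where every $\textit{das}=7$ adequate set wins with probability $1-\frac{7}{32}=\frac{25}{32}$, so all 320 ASG sets are optimal), then sort the 320 sets into the 12 probability classes and certify via the STARS/CYCLES renumbering check that these give exactly 12 non-isomorphic decision matrices. Your added remarks (explicitly deriving $\frac{25}{32}$ from \eqref{100} and noting that distinct probability classes are automatically non-isomorphic while the within-class collapse is a finite computational check) only make explicit what the paper leaves to the ASG/DMG computations and Appendices~\ref{appendix:four3} and~\ref{appendix:four4}.
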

\begin{proof}
Running ASG  with any value of $p$ gives 320 adequate sets. When $\textit{p}=0.5$ all these sets are optimal (with probability $\frac{25}{32}$). We split the 320 sets in 12 probability classes (see section \ref{threethree}). In Appendices \ref{appendix:four3} and \ref{appendix:four4} we give the first element of each class, the decision matrix and the number of isomorphic elements in each class, where isomorphic sets can be detected in the same way (STARS, CYCLES) as in section \ref{threethree}. All 320 decision matrices are generated by  procedure DMG  .\\ The 12 probability classes generates 12 non-isomorphic decision matrices.
\end{proof}

\appendix
\clearpage

\includepdf[
    pages=1,
    scale=0.7,
    %nup=1x1,
    frame,
    offset={2.5cm, -1cm},
    pagecommand={%
        \section{Adequate Set Generator}
	\label{appendix:one}
    %\appendix
}       
]{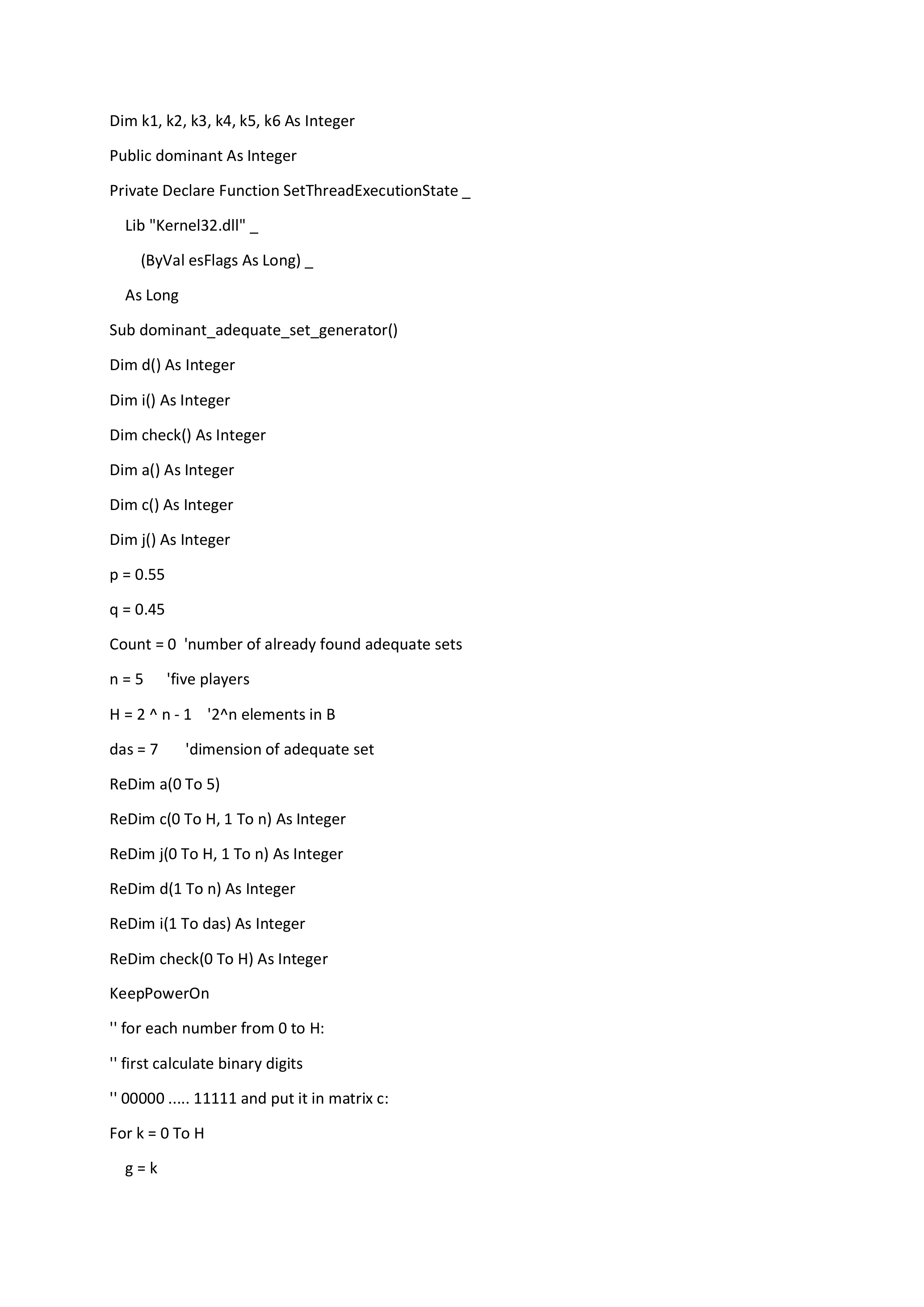}

\includepdf[
    pages=2-4,
    scale=0.8,
    nup=1x1,
    frame,
    offset={2.5cm, -1.0cm},
    pagecommand={%
       %\appendix
}       
]{ASG.pdf}

\includepdf[
    pages=1,
    scale=0.7,
    nup=1x1,
    frame,
    offset={2.5cm, -1.0cm},
    pagecommand={%
        \section{Decision Matrix 		generator}
	\label{appendix:two}
    %\appendix
}       
]{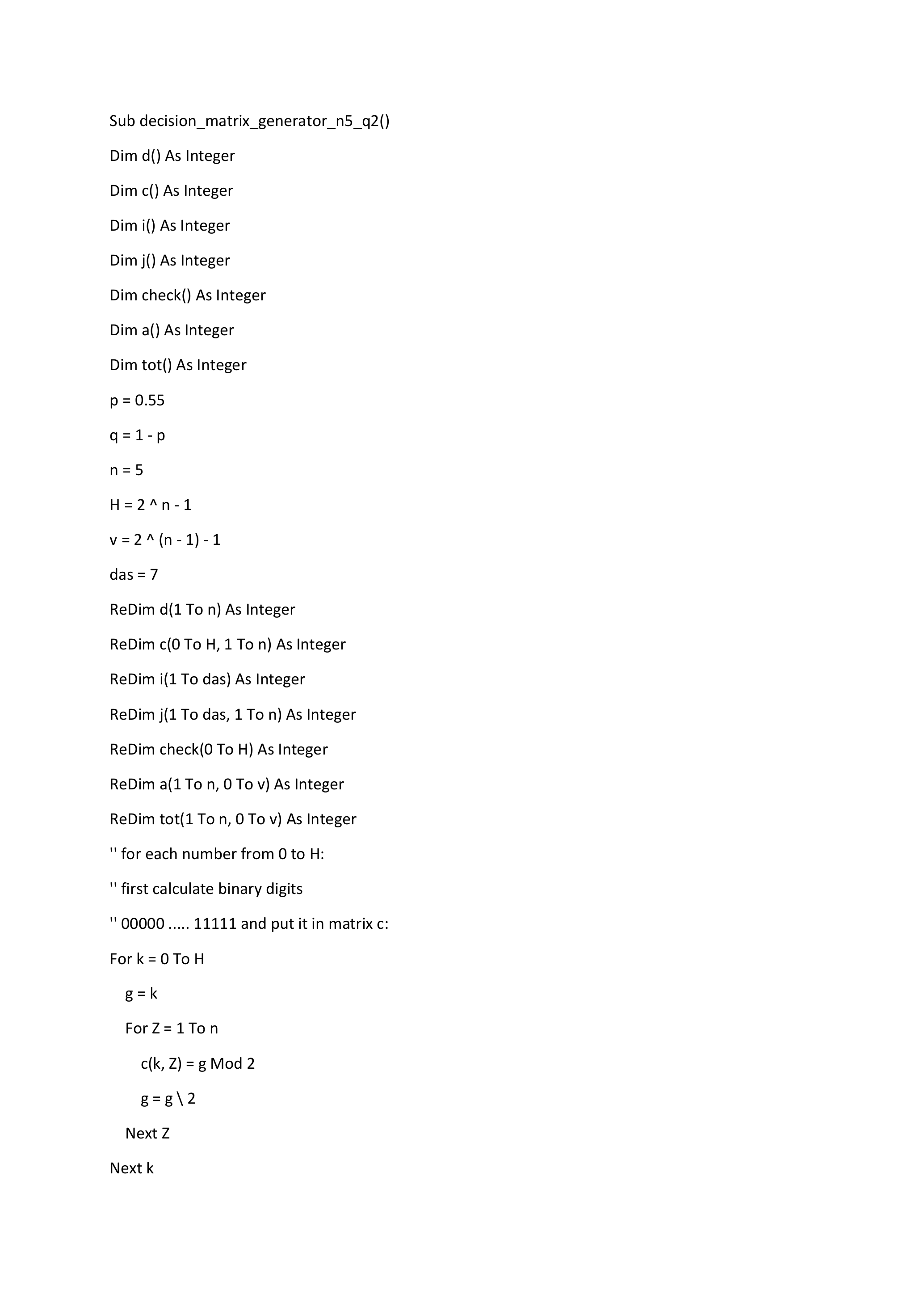}

\includepdf[
    pages=2-3,
    scale=0.8,
    nup=1x1,
    frame,
    offset={2.5cm, -1.0cm},
    pagecommand={%
       %\appendix
}       
]{DMG.pdf}

\section{Q=2, N=5,  12 classes}
\label{appendix:four}
\includegraphics[scale=0.7]{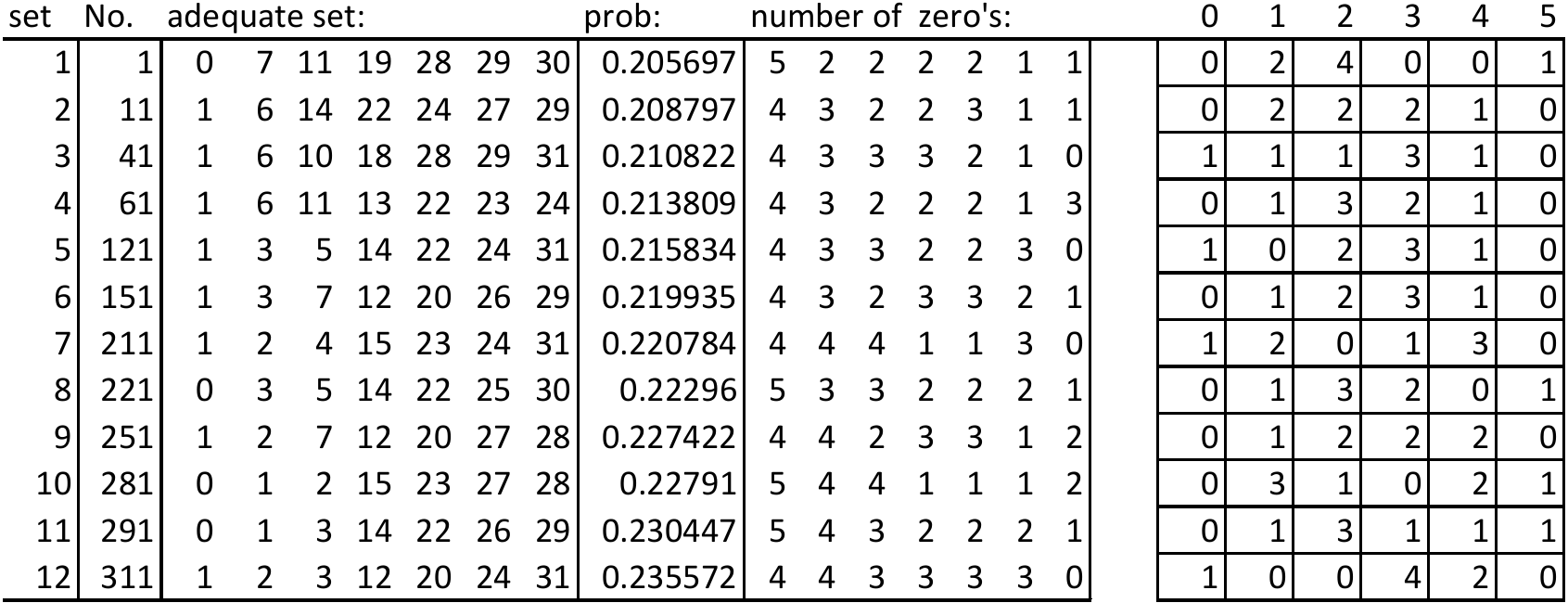}

 \section{Q=2, N=5, two isomorphic sets}
\label{appendix:four2}
\includegraphics[scale=0.7]{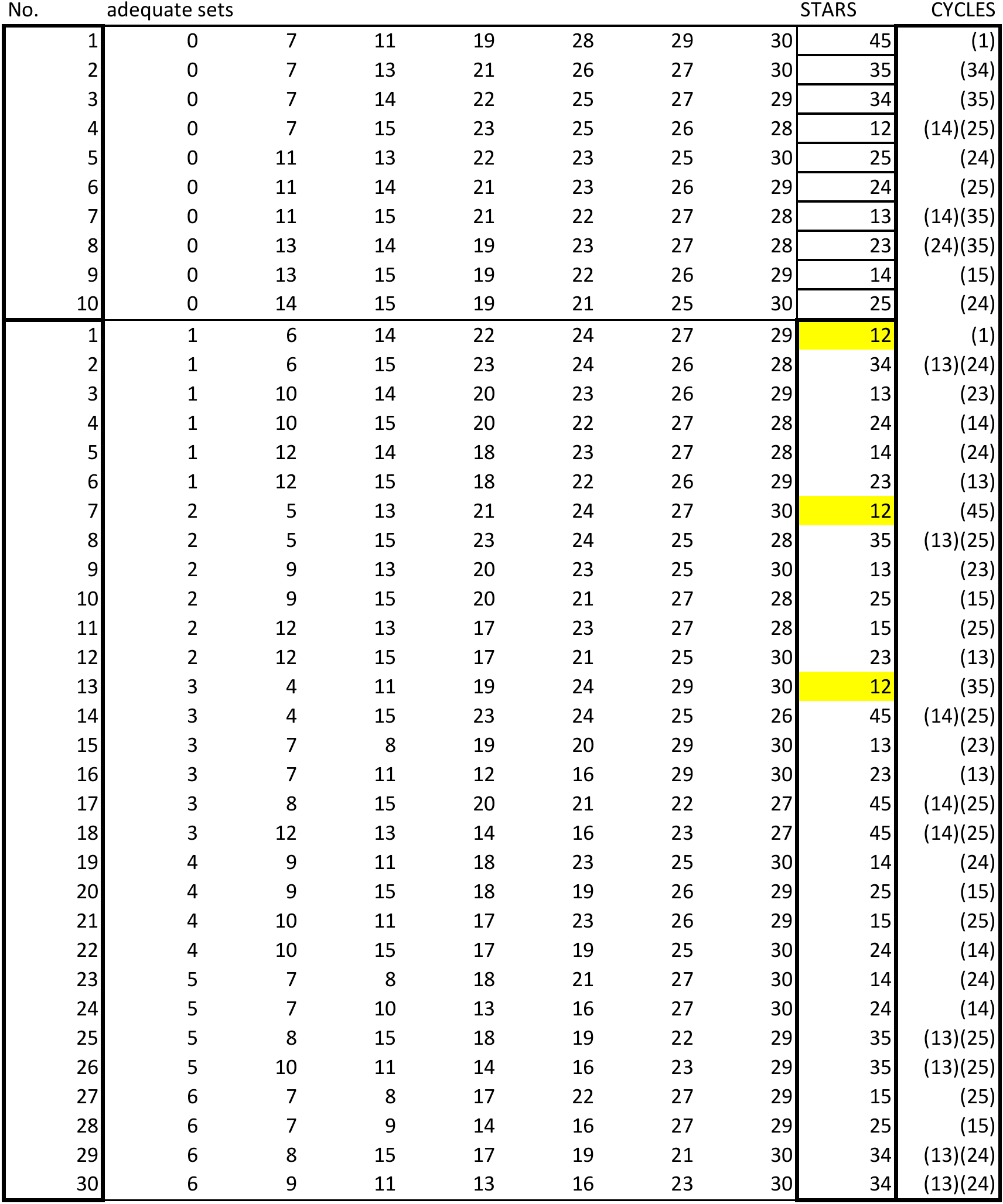}

\section{Q=2, N=5, decision matrices 12 classes, PART I}
\label{appendix:four3} 
 \includegraphics[scale=1.3]{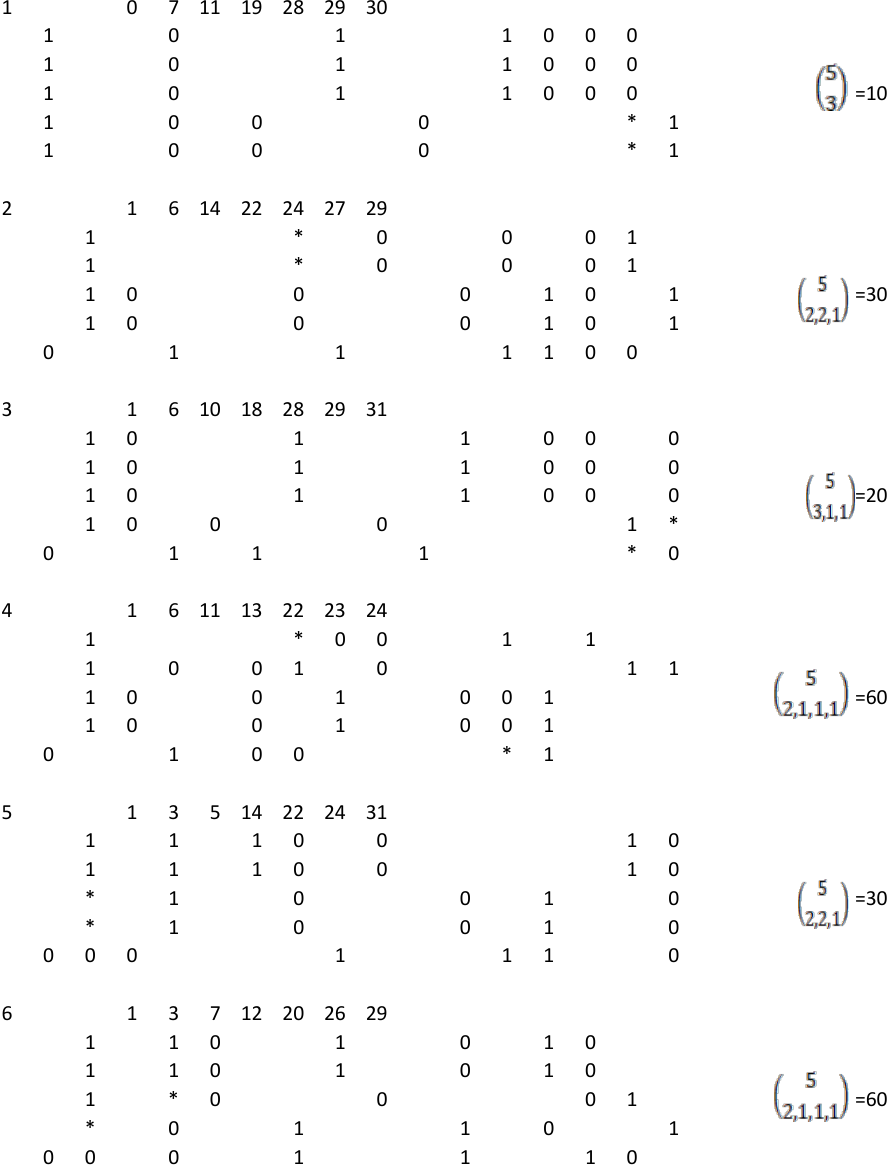}

\section{Q=2, N=5, decision matrices 12 classes, PART II}
\label{appendix:four4} 
 \includegraphics[scale=0.7]{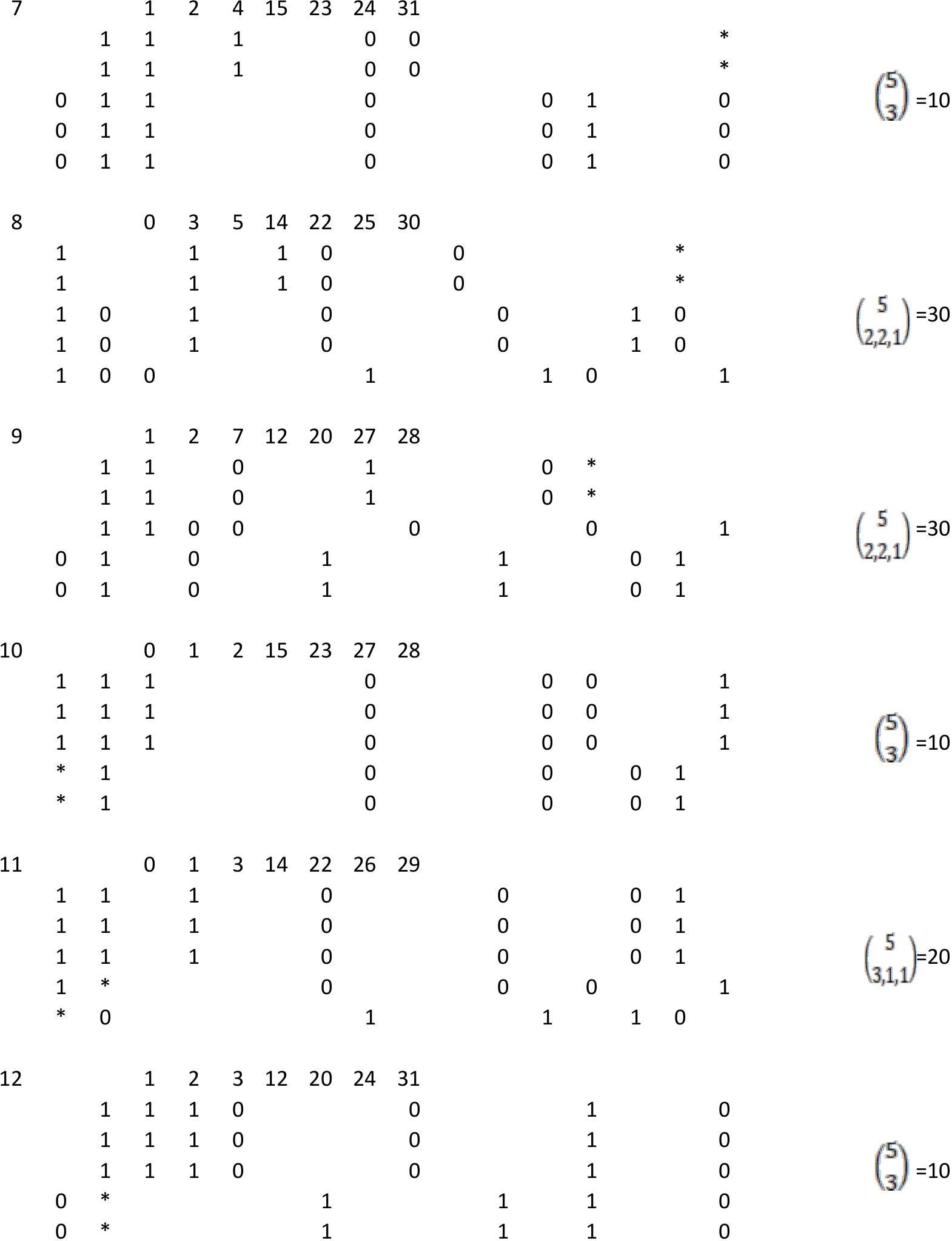}

\clearpage
%BIBLIOGRAPHY
% You do not have to use the same format for your references, but 
%    include everything in this file.  Don't use natbib please.
% If you use BibTeX to create a bibliography, copy the .bbl file into here.
% \newblock is optional (it adds a little space)

\end{document}